\newcommand{\bF}{\mathbf{F}}
\newcommand{\ZZ}{\mathbb{Z}}
\newcommand{\PP}{\mathbb{P}}
\newcommand{\A}{\mathbb{A}}
\newcommand{\RR}{\mathbb{R}}
\newcommand{\NN}{\mathbb{N}}
\newcommand{\CO}{\mathcal{O}}
\newcommand{\cX}{\mathcal{X}}
\DeclareMathOperator{\spec}{Spec}
\DeclareMathOperator{\proj}{Proj}
\DeclareMathOperator{\conv}{conv}
\newtheorem{thm}{Theorem}[section]
\newtheorem{prop}[thm]{Proposition}
\newtheorem{lemma}[thm]{Lemma}
\newtheorem{cor}[thm]{Corollary}
\theoremstyle{definition}
\newtheorem{rem}[thm]{Remark}
\newtheorem{ex}[thm]{Example}
\title{Fano Schemes of Lines on Toric Surfaces}
\author{Nathan Ilten}
\address{Department of Mathematics, Simon Fraser University,
8888 University Drive, Burnaby BC V5A1S6, Canada}
\email{nilten@sfu.ca}
\newcommand{\figrns}{%
 \psset{unit=.6cm}
 \begin{pspicture}(0,0)(1,6)%

 \psset{linewidth=1pt}%
\psdots(0,0)(1,0)(0,1)(1,1)(0,3)(1,3)(0,4)(0,6)
\psline(1,1.5)(1,0)(0,0)(0,1)(0,1.5)
\psline[linestyle=dotted](0,1.5)(0,2.5)
\psline[linestyle=dotted](1,1.5)(1,2.5)
\psline(0,2.5)(0,4.5)
\psline(1,2.5)(1,3)(.83,3.5)
\psline(.17,5.5)(0,6)(0,5.5)
\psline[linestyle=dotted](0,4.5)(0,5.5)
\psline[linestyle=dotted](.83,3.5)(.17,5.5)
\rput (1,6) {$(0,\alpha)$}
\rput (2,3.5) {$(1,\beta)$}
\rput (-1,0) {$(0,0)$}
\rput (2,0) {$(1,0)$}
\end{pspicture}}
\newcommand{\figinv}{%
 \psset{unit=1cm}
 \begin{pspicture}(-2,0)(3,5)%
 \psset{linewidth=1pt}%
\psframe[framearc=.5,linecolor=gray](-.4,.7)(2.4,1.3)
\psdots[dotstyle=o,dotsize=.3,linecolor=gray](-1,2)
\multips(0,1){4}{\multips(1,0){6}{\psdots(-2,0)}}
\psline(-2,3)(0,0)(1,0)(3,2)
\psline[linestyle=dotted](-2,3)(-3,4.5)
\psline[linestyle=dotted](3,2)(4,4.5)
\rput(-.5,0) {$b$}
\rput(1.5,0) {$c$}
\rput(-2.5,3) {$a$}
\rput(2.5,1) {$d$}
\rput(1,1.6){$\mu_E=3$}
\rput(-.5,2.5){$\gamma(b)=2$}
\rput(2.3,2.5){$\gamma(c)=\infty$}
\end{pspicture}}
\newcommand{\figcubic}{%
 \psset{unit=1cm}
 \begin{pspicture}(-2,0)(1,4)%
 \psset{linewidth=1pt}%
\multips(0,1){4}{\multips(1,0){4}{\psdots(-2,0)}}
\psline(-1,2.4)(0,0)(1,0)(-.5,2.7)
\psline[linestyle=dotted](-1,2.4)(-1.5,3.6)
\psline[linestyle=dotted](-.5,2.7)(-1,3.6)
\rput(-.5,0) {$b$}
\rput(1.5,0) {$c$}
\rput(0,1.2) {$v$}
\rput(-.8,2.7) {$a'$}
\end{pspicture}}
\newcommand{\figdpa}{%
 \psset{unit=.6cm}
 \begin{pspicture}(-2,-2)(2,2)%
 \psset{linewidth=1pt}%
\multips(0,1){3}{\multips(1,0){3}{\psdots(-1,-1)}}
\psline(-1,-1)(-1,0)(0,1)(1,1)(1,0)(0,-1)(-1,-1)
\end{pspicture}}
\newcommand{\figdpb}{%
 \psset{unit=.6cm}
 \begin{pspicture}(-2,-2)(2,2)%
 \psset{linewidth=1pt}%
\multips(0,1){3}{\multips(1,0){3}{\psdots(-1,-1)}}
\psline(-1,-1)(-1,0)(0,1)(1,0)(0,-1)(-1,-1)
\end{pspicture}}
\newcommand{\figdpc}{%
 \psset{unit=.6cm}
 \begin{pspicture}(-2,-2)(2,2)%
 \psset{linewidth=1pt}%
\multips(0,1){3}{\multips(1,0){3}{\psdots(-1,-1)}}
\psline(-1,0)(0,1)(1,0)(0,-1)(-1,0)
\end{pspicture}}
\newcommand{\figdpd}{%
 \psset{unit=.6cm}
 \begin{pspicture}(-2,-2)(2,2)%
 \psset{linewidth=1pt}%
\multips(0,1){3}{\multips(1,0){3}{\psdots(-1,-1)}}
\psline(-1,0)(0,-1)(1,1)(-1,0)
\end{pspicture}}
\newcommand{\figdpe}{%
 \psset{unit=.6cm}
 \begin{pspicture}(-2,-2)(2,2)%
 \psset{linewidth=1pt}%
\multips(0,1){4}{\multips(1,0){4}{\psdots(-1,-1)}}
\psline(-1,-1)(0,2)(2,0)(-1,-1)
\end{pspicture}}
\begin{document}

\begin{abstract}
We completely describe the Fano scheme of lines $\bF_1(X)$ for a projective toric surface $X$ in terms of the geometry of the corresponding lattice polygon.
\end{abstract}
\maketitle

\section{Introduction}
Let $X$ be a projective variety embedded in $\PP^n$. The Fano scheme of lines $\bF_1(X)$ is the fine moduli space parametrizing lines of $\PP^n$ contained in $X$. Such Fano schemes have been studied extensively in the case of hypersurfaces (e.g. \cite{barth:78a} \cite{harris:98a} \cite{beheshti:06a}), beginning with the classical theorem of Cayley-Salmon that a non-singular cubic surface contains exactly 27 lines.

In this short note, we completely describe the Fano scheme of lines $\bF_1(X)$ for a projective toric surface $X$.
More specifically, let $X\subset \PP^n$ be a normal toric surface embedded in $\PP^n$ by the complete linear system of a very ample divisor $D$. The data of $X$ and $D$ corresponds to a lattice polygon $P\subset \RR^2$, see e.g \cite{fulton:93a}. In our main result (Theorem \ref{thm:main}), we completely describe $\bF_1(X)$ in terms of the geometry of the polygon $P$.

In \S \ref{sec:prelim} we introduce necessary background and state our main result, which we then prove in \S \ref{sec:proof}. As an easy consequence, we deduce that if $X$ is a non-singular toric surface, then $\bF_1(X)$ is also non-singular (Corollary \ref{cor:reduced}).
We conclude in \S \ref{sec:ex} by considering examples as well as applications of our result to non-toric surfaces. Indeed, our result coupled with degeneration techniques may be used to give upper bounds on the number of lines on certain non-toric surfaces.

\section{Preliminaries and Main Result}\label{sec:prelim}
\subsection{Fano schemes}
Let $K$ be an algebraically closed field and $X\subset \PP_K^n$ any projective $K$-scheme. For any natural number $k\in \NN$, the projective scheme $\bF_k(X)$ is the fine moduli space parametrizing $k$-planes of $\PP^n$ contained in $X$, see e.g. \cite[\S IV.3]{eisenbud:00a} for a detailed description. If $T$ is an algebraic torus acting on $\PP^n$ which fixes $X$, then the Fano schemes $\bF_k(X)$ inherit a natural $T$-action. 

\subsection{Projective toric varieties}
Consider an $m$-dimensional polytope $P\subset \RR^m$ whose vertices are lattice points in $\ZZ^m$. Set $$
X_P=\proj K[S_P]
$$
where $S_P$ is the subsemigroup of $\ZZ^m\times \ZZ$ generated by $(P\cap \ZZ^m)\times 1$. Then $X_P$ is an $m$-dimensional projective toric variety (with action by the torus $T=\spec K[\ZZ^m]$) and any normal projective toric variety embedded by a complete linear system is of this form, see e.g. \cite[\S 3.4]{fulton:93a}. In the case of primary interest to us ($m=2$), the converse is true, that is, $X_P$ is always normal and embedded by a complete linear system. 

The torus fixed points of the Fano schemes $\bF_k(X_P)$ are easy to describe. By a \emph{primitive $k$-simplex} we mean the convex hull of $k+1$ lattice points in $\ZZ^m$ containing no other lattice points.
\begin{prop}\label{prop:fixed}
The torus fixed points of $\bF_k(X_P)$ are in bijection with the set of faces of $P$ which are primitive $k$-simplices. 
\end{prop}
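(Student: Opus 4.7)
The plan is to combine a classification of $T$-invariant $k$-planes of $\PP^n$ with the orbit-face correspondence for $X_P$. First, I would observe that $T$ acts on $K^{n+1}=\bigoplus_{v\in P\cap\ZZ^m} Ke_v$ with pairwise distinct weights (the characters $\chi^v$, one for each lattice point of $P$), so every $T$-invariant linear subspace is a direct sum of weight lines. Hence the $T$-fixed $k$-planes of $\PP^n$ are exactly the coordinate subspaces $\PP^S:=\PP\bigl(\bigoplus_{v\in S}Ke_v\bigr)$ for subsets $S\subset P\cap\ZZ^m$ with $|S|=k+1$.

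Second, $\PP^S$ is contained in $X_P$ if and only if it is a closed, irreducible, $T$-invariant subvariety of $X_P$. By the orbit-face correspondence (see e.g.\ \cite[\S 3.2]{fulton:93a}), every such subvariety has the form $\overline{O_F}=X_F$ for a unique face $F$ of $P$, where $X_F\hookrightarrow \PP^{F\cap\ZZ^m}$. Matching ambient coordinate spaces yields $S=F\cap\ZZ^m$ and reduces the problem to showing that $X_F=\PP^{F\cap\ZZ^m}$ precisely when $F$ is a primitive $k$-simplex.

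For this final step, suppose first that $F=\conv(v_0,\ldots,v_k)$ is a primitive $k$-simplex, so $F\cap\ZZ^m=\{v_0,\ldots,v_k\}$. Since the $v_i$ are affinely independent, the elements $(v_i,1)\in\ZZ^m\times\ZZ$ are linearly independent, so $S_F$ is freely generated by them and $X_F=\proj K[x_0,\ldots,x_k]=\PP^k$, which fills the ambient coordinate $\PP^{F\cap\ZZ^m}$. Conversely, $X_F$ has dimension $\dim F$, so if $X_F=\PP^{F\cap\ZZ^m}$ then $\dim F=|F\cap\ZZ^m|-1$, forcing $F$ to be a $k$-simplex with lattice points only at its vertices. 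The main obstacle is this last semigroup/dimension computation; the rest is a direct assembly of standard toric facts.
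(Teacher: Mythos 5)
Your proof is correct and takes essentially the same route as the paper's: identify the $T$-fixed $k$-planes contained in $X_P$ with orbit closures $X_F$ of faces $F$ via the orbit--face correspondence, then characterize when $X_F$ is a linear space, which happens exactly for primitive $k$-simplices. Your preliminary classification of $T$-fixed $k$-planes of $\PP^n$ via the weight decomposition, and the dimension count in the converse, are just slightly more explicit versions of steps the paper leaves implicit.
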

\begin{proof}
Any fixed point of $\bF_k(X_P)$ must be the closure of a $k$-dimensional $T$-orbit in $X_P$, which are in bijection to the set of $k$-dimensional faces of $P$ \cite[\S 3]{fulton:93a}. Given a face $Q\subset P$, the corresponding orbit closure as a subvariety of $\PP^n$ is simply $X_Q$ sitting inside of a linear subspace of $\PP^n$. But $X_Q$ is a linear space if and only if there are no relations among the lattice points of $Q$, that is, $Q$ is a primitive $k$-simplex.
\end{proof}

\subsection{Rational normal scrolls}
We say that a toric surface $X_P$ is a \emph{rational normal scroll} if $P$ is lattice equivalent to a polyhedron of the form
$$
P_{\alpha,\beta}=\conv\{(0,0),(1,0),(0,\alpha),(1,\beta)\}\qquad \alpha,\beta\in\ZZ_{\geq 0}\quad a>0\quad \alpha\geq \beta,
$$
see Figure \ref{fig:scroll}. Note that 
$X_{P_{\alpha,\beta}}$ is embedded in $\PP^{\alpha+\beta+1}$ with coordinates $x_u$, $u\in \ZZ^2\cap P$ and cut out by the $2\times 2$ minors of the matrix
\begin{equation}\label{eqn:minors}
\left(
\begin{array}{c c c c c c c c}
x_{(0,0)}&x_{(0,1)}&\ldots&x_{(0,\alpha-1)}&x_{(1,0)}&x_{(1,1)}&\ldots&x_{(1,\beta-1)}\\
x_{(0,1)}&x_{(0,2)}&\ldots&x_{(0,\alpha)}&x_{(1,1)}&x_{(1,2)}&\ldots&x_{(1,\beta)}\\
\end{array}
\right).
\end{equation}

\begin{figure}
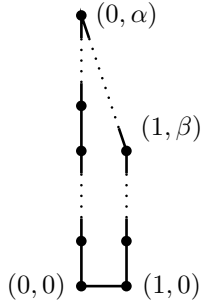

\figrns
\caption{The polytope $P_{\alpha,\beta}$ for a rational normal scroll}\label{fig:scroll}
\end{figure}

Note that as an abstract variety, $X_{P_{\alpha,0}}$ is the (weighted) projective space $\PP(1,1,\alpha)$, and for $\beta\geq 1$, $X_{P_{\alpha,\beta}}$ is the Hirzebruch surface 
$$\mathcal{F}_{\alpha-\beta}=\proj_{\PP^1}\left( \CO_{\PP^1}\oplus\CO_{\PP^1}(\alpha-\beta)\right).$$

\subsection{Lattice geometry}
Let $v,w\in \ZZ^2$ be primitive vectors. Then there exists a unique $p,q\in \ZZ$, $0\leq q< p$ with $q,p$ relatively prime such that $A(v)=(1,0)$ and $A(w)=(-q,p)$ for some $A\in GL_2(\ZZ)$ \cite[\S 2.2]{fulton:93a}. 
Concretely, $p=|\det(v,w)|$ and if $\alpha,\beta$ are integers such that $\alpha v_1+\beta v_2=1$, then $q$ is the $p$ less the remainder of $\alpha w_1+\beta w_2$ by $p$.
We define
$$\gamma(v,w):=\lceil p/q \rceil.$$
Note that if $q=0$, we adopt the convention that $\gamma(v,w)=\infty$.

Now let $P$ be a lattice polygon in $\RR^2$. By Proposition \ref{prop:fixed}, we see that fixed points of $\bF_1(X_P)$ correspond to primitive edges of $P$. For each primitive edge $E=\overline{bc}$ of $P$, we will definite invariants  $\gamma_{b}$, $\gamma_{c}$, and $\mu_E$.
Indeed, let $a,d$ be the other lattice points on the boundary of $P$ adjacent to $b$ and $c$ respectively. Then 
\begin{align*}
\gamma_{b}:&=\gamma(c-b,a-b);\\
\gamma_{c}:&=\gamma(b-c,d-c).
\end{align*}
Likewise, let $u_E\in (\ZZ^2)^*$ be the primitive functional such that $u_E(b)=u_E(c)$, and $u_E(v)\geq u_E(b)$ for all $v\in P$. We define
$$
\mu_E:=\#\{v\in P\cap \ZZ^2\ |\ u_E(v)=u_E(b)+1\}.
$$
Note that $\mu_E\geq 1$.

These quantities have easy geometric interpretations, see Figure \ref{fig:invariants} for an illustration.
The quantity $\mu_E$ is the number of lattice points of $P$ in ``height one'' above the edge $E$. On the other hand, $\gamma(b)$ is the height in which a lattice point of $P$ first appears ``to the left'' of the ray extending from $b$ through the left-most lattice point in height one. Likewise,  $\gamma(c)$ is the height in which a lattice point of $P$ first appears ``to the right'' of the ray extending from $b$ through the right-most lattice point in height one.

\begin{figure}
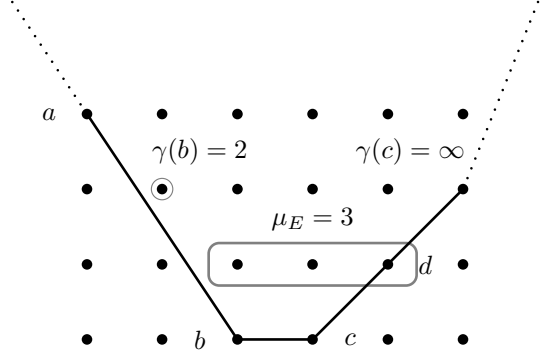

\figinv
\caption{The quantities $\mu_E$, $\gamma(b)$, and $\gamma(c)$}\label{fig:invariants}
\end{figure}

\subsection{Main result}
We now are able to formulate our main result. For any $l\in\NN$, set
$$
\xi_l:=\spec K[z]/z^l.
$$

\begin{thm}\label{thm:main}
Let $X_P$ be a projective normal toric surface.
\begin{enumerate}
\item \label{part:one} If $P=P_{\alpha,\beta}$, then 
$$\bF_1(X_P)=
\begin{cases}
\PP^2 & \alpha=1,\beta=0\\
\PP^1\sqcup \PP^1 & \alpha=\beta=1\\
\PP^1\times \xi_2 & \alpha\geq 2, \beta=0\\
\PP^1\sqcup \xi_1 & \alpha\geq 2, \beta=1\\
\PP^1 & \alpha\geq 2, \beta\geq 2
\end{cases}
$$
\item \label{part:two} If $X$ is not a rational normal scroll, then $\bF_1(X_P)$ is either zero-dimensional or empty, with irreducible components in bijection to the primitive edges $E$ of $P$. For each such edge $E=\overline{bc}$, the corresponding irreducible component $Z_E$ has the form  
$$Z_E=
\begin{cases}
 \xi_1 & \mu_E\geq 2\\
\xi_{\min \{\gamma(b),\gamma(c)\}} & \mu_E=2\\
\xi_{\gamma(b)}\times \xi_{\gamma(c)} & \mu_E=1
\end{cases}.
$$
In particular, we have 
$$\deg Z_E=
\begin{cases}
 1 & \mu_E\geq 2\\
\min \{\gamma(b),\gamma(c)\} & \mu_E=2\\
\gamma(b)\cdot \gamma(c) & \mu_E=1
\end{cases}.
$$
\end{enumerate}
\end{thm}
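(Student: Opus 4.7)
The plan is to handle the two parts of the theorem by different methods. For Part (\ref{part:one}), I would directly verify each scroll case using the equations (\ref{eqn:minors}) together with classical facts about lines on $\PP^1$-bundles. The rulings of $X_{P_{\alpha,\beta}}$ always contribute a reduced $\PP^1$ of lines; a degree-one directrix appears precisely when $\beta\leq 1$, giving either the second component of the Segre quadric $\alpha=\beta=1$ or the isolated line in the cases $\alpha\geq 2$, $\beta=1$; and the cone case $\beta=0$, $\alpha\geq 2$ (with $X_P\cong \PP(1,1,\alpha)$) is the only one producing a non-reduced $\PP^1\times\xi_2$, coming from the vertex singularity through which every ruling passes. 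The scheme structure in each case can be computed on a Grassmannian chart by a direct tangent-space calculation from the scroll equations.

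For Part (\ref{part:two}), I would first show that $\bF_1(X_P)$ is $0$-dimensional whenever $X_P$ is not a scroll: a positive-dimensional family of lines in $X_P$ sweeps out a $T$-invariant ruled subsurface, which must equal $X_P$ by irreducibility, and taking limits of a generic member of this family under a one-parameter subgroup of $T$ produces two parallel primitive edges of $P$ separated by primitive lattice slices, forcing $P$ to be lattice-equivalent to some $P_{\alpha,\beta}$. Having reduced to the $0$-dimensional case, each irreducible component $Z_E$ is supported at a single $T$-fixed point $[L_E]$ indexed by a primitive edge $E$ (Proposition \ref{prop:fixed}), and the task becomes local. After a $GL_2(\ZZ)$ change of variables placing $b=(0,0)$ and $c=(1,0)$, I would parametrize nearby lines as $[s:t]\mapsto sP_0+tP_1$ with $P_0 = e_b + \sum_{u\ne b}\alpha_u e_u$ and $P_1 = e_c+\sum_{u\ne c}\beta_u e_u$, imposing the slice $\alpha_{(1,0)}=\beta_{(0,0)}=0$ to eliminate the reparametrization redundancy. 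Substituting this parametrization into each binomial defining $X_P$ and collecting coefficients in $s,t$ then yields the local ideal of $\bF_1(X_P)$.

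The binomials involving only the $\mu_E$ lattice points at height $1$, namely $x_{(0,0)}x_{(k+1,1)}= x_{(1,0)}x_{(k,1)}$ for consecutive pairs, give exact relations that eliminate all height-$1$ coordinates except two scalars $\tau_b,\tau_c$ when $\mu_E=1$, a single scalar $\tau=\tau_b=\tau_c$ when $\mu_E=2$, and nothing at all when $\mu_E\geq 3$ (immediately yielding $Z_E=\xi_1$). For $\mu_E\in\{1,2\}$, further relations arise from binomials involving the first lattice point of $P$ pulled inward from the slanted side above $b$, which sits at height exactly $\gamma(b)$ (and analogously for $c$); these produce relations of the shape $\tau_b^{\gamma(b)}=0$ and $\tau_c^{\gamma(c)}=0$, independent in the $\mu_E=1$ case (giving $\xi_{\gamma(b)}\times\xi_{\gamma(c)}$) and, after the identification $\tau_b=\tau_c$, collapsing to a single relation of order $\min\{\gamma(b),\gamma(c)\}$ in the $\mu_E=2$ case.

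The main obstacle is to show that the binomials listed above already generate the \emph{entire} local ideal, so that no further binomial imposes an additional constraint and the fat-point structure is exactly as predicted. One must also interpret the boundary cases $\gamma(b)=\infty$ or $\gamma(c)=\infty$ (smooth toric corners, in which no inward lattice point exists on that side and consequently no relation is produced); the earlier reduction to the $0$-dimensional case guarantees that, when $\mu_E\leq 2$ and $X_P$ is not a scroll, at most one of $\gamma(b),\gamma(c)$ is infinite, so the resulting local ring remains Artinian as claimed, and the length formulas follow at once from the explicit presentations $K[\tau_b,\tau_c]/(\tau_b^{\gamma(b)},\tau_c^{\gamma(c)})$ or $K[\tau]/(\tau^{\min\{\gamma(b),\gamma(c)\}})$.
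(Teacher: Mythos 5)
Your overall strategy for part (\ref{part:two}) --- localize at each torus-fixed line, parametrize nearby lines, substitute into the defining binomials, and read off relations among the height-one coordinates --- is the same as the paper's, and your list of relations (the height-one binomials, plus one nilpotency relation of order $\gamma(b)$, resp.\ $\gamma(c)$, coming from the first inward lattice point on each side) matches the paper's Lemma \ref{lemma:heightone} together with Equations \eqref{eqn:square} and \eqref{eqn:an}. But the step you yourself flag as ``the main obstacle'' --- that these relations generate the \emph{entire} local ideal $J$ --- is exactly where the bulk of the paper's proof lives, and you do not close it. The paper's argument is not a formality: it shows that any further binomial of $I_P$ either has high enough degree in the second grading that its image dies automatically modulo the nilpotency relations, or involves only lattice points of a sub-scroll $P_{\alpha,\beta}$ or $P_{\alpha,0}$, where vanishing is checked by exhibiting the images of the scroll minors as $2\times 2$ minors of an explicitly degenerate matrix such as \eqref{eqn:minors2}. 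Without some such argument your presentations $K[\tau_b,\tau_c]/(\tau_b^{\gamma(b)},\tau_c^{\gamma(c)})$ and $K[\tau]/(\tau^{\min\{\gamma(b),\gamma(c)\}})$ are only upper bounds on the local rings, giving $\deg Z_E\leq \gamma(b)\gamma(c)$ rather than equality.

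You also miss the combinatorial fact that makes the generation step tractable when $\mu_E=1$: convexity forces (after possibly swapping $b$ and $c$) either $\gamma(c)=2$ or $\gamma(b)=\gamma(c)=3$, so the local picture is controlled either by a sub-polygon $P_{\alpha,0}$ plus a single extra point $a'$, or by the single cubic $x_v^3-x_{a'}x_bx_c$; in the latter case both relations $\sigma_v^3=0$ and $\tau_v^3=0$ come from one binomial, not from two independent ``inward points'' as your sketch suggests. Two smaller issues: for $\mu_E=1$ you need \emph{both} $\gamma$'s finite for the local ring to be Artinian, so your ``at most one is infinite'' is too weak there (the paper's remark shows both are finite in the non-scroll case); and for part (\ref{part:one}) a tangent-space computation cannot pin down the multiplicity-two structure $\PP^1\times\xi_2$ --- you need the full local ring, i.e.\ the same generation argument. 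Your up-front reduction to zero-dimensionality via the swept-out ruled surface is a genuinely different (and plausible) route from the paper, which instead gets dimension zero a posteriori from the Artinian local rings plus the fact that every component of the projective $T$-scheme $\bF_1(X_P)$ contains a fixed point; but that reduction does not substitute for the missing generation argument.
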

\begin{rem}
Suppose that $X_P$ is not a rational normal scroll. If $\mu_E=2$, then either $\gamma(b)$ or $\gamma(c)$ must be finite. Likewise, if $\mu_E=1$, then both $\gamma(b)$ and $\gamma(c)$ are finite. Hence, the quantities appearing in part \ref{part:two} of the theorem are finite.
\end{rem}

\begin{cor}\label{cor:reduced}
Let $X_P$ be a non-singular projective toric surface. Then $\bF_1(X_P)$ is non-singular. In particular, it is reduced.
\end{cor}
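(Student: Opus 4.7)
The plan is to combine Theorem \ref{thm:main} with the standard combinatorial smoothness criterion for toric surfaces --- $X_P$ is smooth if and only if at every vertex $b$ of $P$ the two adjacent primitive edge vectors form a $\ZZ$-basis of $\ZZ^2$ --- and check the conclusion of the corollary in each case separately.

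For part (\ref{part:one}) I would first determine which $X_{P_{\alpha,\beta}}$ are smooth. A direct vertex-by-vertex check on $P_{\alpha,\beta}$ shows that the smooth scrolls are exactly those with $(\alpha,\beta)=(1,0)$ or $\beta\geq 1$. This rules out the only non-reduced Fano scheme appearing in Theorem \ref{thm:main}(\ref{part:one}), namely $\PP^1\times \xi_2$, which arises only from the singular weighted projective spaces $\PP(1,1,\alpha)$ with $\alpha\geq 2$. In each remaining smooth subcase the Fano scheme reads off as $\PP^2$, $\PP^1\sqcup\PP^1$, $\PP^1\sqcup\xi_1$ (i.e.\ $\PP^1$ together with a reduced point), or $\PP^1$, all visibly smooth.

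For part (\ref{part:two}) I would show that each component $Z_E$ is a single reduced point. Fix a primitive edge $E=\overline{bc}$; since $E$ is a face of $P$, both $b$ and $c$ are vertices. Applying the smoothness criterion at $b$ to the primitive vectors $v=c-b$ and $w=a-b$ gives $|\det(v,w)|=1$, which in the normal form of the subsection on lattice geometry forces $p=1$ and hence $q=0$; the stated convention then yields $\gamma(b)=\infty$, and symmetrically $\gamma(c)=\infty$. The remark following Theorem \ref{thm:main} then excludes $\mu_E\in\{1,2\}$, leaving $\mu_E\geq 3$ and thus $Z_E=\xi_1$ by the theorem. Hence $\bF_1(X_P)$ is either empty or a finite disjoint union of reduced points, which is smooth.

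The only obstacle worth flagging is the small dictionary between smoothness of $P$ at a vertex and the invariants $\gamma(b),\gamma(c)$: once one observes that $|\det(v,w)|=1$ forces $p=1$ and $q=0$ in the normal form and hence $\gamma=\infty$, the corollary follows mechanically from Theorem \ref{thm:main} together with the remark after it.
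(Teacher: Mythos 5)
Your proof is correct and follows the same overall strategy as the paper's: invoke the combinatorial smoothness criterion at the vertices of $P$, read the answer off Theorem \ref{thm:main} case by case, and discard the singular scrolls $P_{\alpha,0}$ with $\alpha\geq 2$, which are the only source of non-reducedness in part (\ref{part:one}). The one place where you genuinely diverge is the non-scroll case. The paper disposes of it with the single unproved assertion that an edge with $\mu_E\leq 2$ forces $X_P$ to be a rational normal scroll, whereas you derive $\gamma(b)=\gamma(c)=\infty$ for every primitive edge directly from unimodularity of the edge vectors at each vertex ($|\det(c-b,a-b)|=1$ gives $p=1$, hence $q=0$ in the normal form) and then invoke the Remark after Theorem \ref{thm:main} to exclude $\mu_E\in\{1,2\}$. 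Your route is slightly more explicit: it isolates exactly how smoothness enters, and it avoids leaning on the paper's claim as literally stated, which requires the standing smoothness hypothesis to be true (the pentagon of Figure \ref{fig:b} has edges with $\mu_E\leq 2$ but is not a scroll). Both arguments land in the same place: for a smooth non-scroll $X_P$ every primitive edge has $\mu_E\geq 3$, so every component is $\xi_1$ and the Fano scheme is a finite set of reduced points (or empty).
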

\begin{proof}
Recall that $X_P$ is non-singular if and only if for every vertex $v$ of $P$, the primitive lattice vectors in the directions of the outgoing edges form a lattice basis \cite[\S 2.1, 3.4]{fulton:93a}. It is straightforward to check that if there is some edge $E$ of $P$ such that  $\mu_E\leq 2$, then $X_P$ must be a rational normal scroll. Furthermore, the case $P= P_{\alpha,\beta}$ for $\alpha\geq 2,\beta=0$ cannot occur, since it is not smooth. The claim now follows directly from Theorem \ref{thm:main}.
\end{proof}
\section{Proof of Main Result}\label{sec:proof}
Throughout this section, we fix a lattice polygon $P$ and consider the toric surface $X_P$.
To each lattice point $u\in\ZZ^2\cap P$ associate a variable $x_u$; we consider the $\ZZ^2$-graded ring
$$
R=K[x_u\ |\ u\in\ZZ^2\cap P]
$$
with grading given by $\deg x_u=u$.
Then $X_P$ is embedded in $\proj R$; its ideal $I_P$ is generated by binomials corresponding to homogeneous relations among the lattice points of $P$, and is homogeneous with respect to the $\ZZ^2$-grading of $R$.

We will begin the proof by describing the Fano scheme in an affine neighborhood of a given toric fixed point of $\bF_1(X_P)$. By Proposition \ref{prop:fixed}, such a fixed point corresponds to a primitive edge $E=\overline{bc}$ of $P$, which we now fix. It will be useful to apply a lattice transformation to bring $P$ into a standard form.
Let $a$ and $d$ be the other lattice points on the boundary of $P$ adjacent to $b$ and $c$, respectively. After applying an invertible affine lattice transformation to $\ZZ^2$, we may assume that $b=(0,0)$, $c=(1,0)$, $a=(-q,p)$ for $p>0$, $q\geq 0$, $p>q$ and $q,p$ relatively prime, arriving at something similar to what is pictured in  Figure \ref{fig:invariants}. Note that now $\mu_E$ is simply equal to 
$$
\mu_E=1+\max \{\lambda \in \ZZ\ |\ (\lambda,1)\in P\}
$$
and $\gamma(b)=\lceil p/q \rceil$.

The line $L_E$ on $X_P$ corresponding to $E$ is given by the vanishing of the coordinates $x_u$, where $u=(u_1,u_2)$ satisfies $u\in P\cap \ZZ^2$ and $u_2>0$. 
We recall the description of $\bF_1(X_P)$ in a neighborhood of this line, see e.g. \cite[\S IV.3]{eisenbud:00a}. 
Let $S$ be the $\ZZ$-graded ring 
$$
S=K[s,t,\sigma_u,\tau_u\ |\ u\in P\cap\ZZ^2,\ u\neq b,c]
$$
with grading given by $\deg s=\deg t=0$, $\deg \sigma_u=\deg \tau_u=u_2$ for $u=(u_1,u_2)$.
We may view the coordinates $\sigma_u,\tau_u$ as parametrizing an open subset of lines in $\proj R$ by considering the line between the points $(x_b=1,x_c=0,x_u=\sigma_u)$ and $(x_b=0,x_c=1,x_u=\tau_u)$.

Consider the homomorphism 
\begin{align*}
\phi:R & \to S\\
x_u &\mapsto \begin{cases}
s&u=b\\
t&u=c\\
\sigma_us+\tau_ut&\mathrm{else}
\end{cases}.
\end{align*}
Note that this maps homogeneous elements of degree $(i,j)$ to elements of degree $j$.
In a neighborhood of $L_E$, $\bF_1(X_P)$ has coordinates $\sigma_u$, $\tau_u$ for $u\neq b,c$ and is cut out by the equations imposed by the condition that $\phi(I_P)=0$. Let $$J\subset K[\sigma_u,\tau_u\ |\ u\neq b,c]$$ denote the corresponding ideal. In other words, $J$ is generated by the coefficients of elements of $f\in \phi(I_P)$ when $f$ is viewed as a polynomial in $s$ and $t$.

Set $v=(0,1)$; note that $v\in P$, since it is in the convex hull of $a,b,c$.
\begin{lemma}\label{lemma:heightone}
Consider $u=(i,j)\in P\cap \ZZ^2$ with $j>0$. Then modulo $J$, $\sigma_u$ and $\tau_u$ can be written in terms of $\sigma_v$ and $\tau_v$. More precisely, we have
\begin{align*}
\sigma_u&=\begin{cases}
0 \mod J & i > 0\\
{j \choose i+j} \sigma_v^{i+j}\tau_v^{-i}\mod J & i\leq 0
\end{cases}\\
\tau_u&=\begin{cases}
0 \mod J & i > 1\\
{j \choose i+j-1} \sigma_v^{i+j-1}\tau_v^{-i+1}\mod J & i\leq 1
\end{cases}.
\end{align*}
\end{lemma}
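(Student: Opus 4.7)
The plan is, for each $u = (i,j) \in P \cap \ZZ^2$ with $j > 0$, to exhibit an explicit binomial in the toric ideal $I_P$ that relates $x_u$ to monomials in $x_b$, $x_c$, and $x_v$, then apply $\phi$ and extract the resulting coefficients in $s$ and $t$.

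First I would observe that our normalization forces $i \geq 1 - j$. Indeed, the edge $\overline{ba}$ with $a = (-q,p)$, $0 \leq q < p$, has inward normal $(p,q)$, so any lattice point $(i,j) \in P$ satisfies $pi + qj \geq 0$, giving $i \geq -qj/p > -j$ and hence $i \geq 1 - j$ since $i$ is an integer. Two cases then cover everything.

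If $i \geq 0$, then $x_u \cdot x_b^{j+i-1}$ and $x_v^j \cdot x_c^i$ share both the $\ZZ^2$-degree $(i,j)$ and the total degree $j+i$, so their difference lies in $I_P$. Applying $\phi$ gives, modulo $J$, the identity
$$
(\sigma_u s + \tau_u t)\, s^{j+i-1} \equiv (\sigma_v s + \tau_v t)^j\, t^i
$$
of polynomials in $s$ and $t$. Reading off the coefficients of $s^{j+i}$ and $s^{j+i-1}t$ on each side and using $\binom{j}{k} = \binom{j}{j-k}$ yields the stated formulas for $\sigma_u$ and $\tau_u$, with the value $0$ appearing exactly when the relevant binomial coefficient is out of range. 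If instead $1 - j \leq i \leq 0$, the analogous binomial $x_u \cdot x_c^{-i} \cdot x_b^{j+i-1} - x_v^j$ lies in $I_P$, and under $\phi$ it becomes
$$
(\sigma_u s + \tau_u t)\, t^{-i}\, s^{j+i-1} \equiv (\sigma_v s + \tau_v t)^j \pmod{J};
$$
comparing the coefficients of $s^{j+i} t^{-i}$ and $s^{j+i-1} t^{1-i}$ produces the formulas in this range.

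There is no essential obstacle: the sole nontrivial input is that each binomial actually lies in $I_P$, which uses only that $b$, $c$, $v$, and $u$ all belong to $P \cap \ZZ^2$ (with $v \in \conv(a,b,c) \subset P$ automatic) together with the non-negativity of the exponents (guaranteed by $i \geq 1 - j$). Everything else is routine binomial-theorem bookkeeping.
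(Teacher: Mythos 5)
Your proposal is correct and follows essentially the same route as the paper: it uses exactly the binomials $x_ux_b^{i+j-1}-x_v^jx_c^i$ (for $i\geq 0$) and $x_ux_b^{i+j-1}x_c^{-i}-x_v^j$ (for $i\leq 0$) coming from $u=ic+jv$, applies $\phi$, and extracts coefficients of $s$ and $t$. The only difference is that you spell out the verification $i\geq 1-j$ (which the paper asserts as $i+j>0$) and the binomial-coefficient bookkeeping in more detail.
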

\begin{proof}
Note that $u=ic+jv$ with $j\geq 1$, $i+j>0$. This translates to the relations 
\begin{align*}
&x_ux_b^{i+j-1}-x_v^jx_c^i\in I_P \qquad &i\geq 0; \\
&x_ux_b^{i+j-1}x_c^{-i}-x_v^j\in I_P \qquad &i\leq 0.
\end{align*}
By inspecting the images of these polynomials under $\phi$, we arrive at the equations
of the desired form. For example, the coefficient of $s^{i+j}$ in $\phi(x_ux_b^{i+j-1}-x_v^jx_c^i)$ is $\sigma_u-\sigma_v^j$ if $i=0$ and is $\sigma_u$ if $i>0$, leading respectively to the equations $\sigma_u=\sigma_v^j$ and $\sigma_u=0$.
\end{proof}

We now discuss various cases which can occur in our local study of $\bF_1(X_P)$ around $L_E$, based on the value of $\mu_E$. For each case, our task is twofold: firstly, determine some relations among the $\sigma_u,\tau_u$; secondly, show that they generate $J$.

\subsection{Case $\mu_E> 2$}\label{sec:g2}
Fix a primitive edge $E$ of $P$ as above and suppose that $\mu_E\geq 2$. Then from above we have that 
$v=(1,0)$, $w=(1,1)$, and $(2,1)$ are all in $P$. The relation
\begin{equation}\label{eqn:square}
x_bx_w-x_cx_v=0
\end{equation}
implies that $\tau_v=\sigma_w=0,\tau_{w}=\sigma_v$ modulo $J$, while 
the relation
$$
x_bx_{(2,1)}-x_cx_{w}=0
$$
implies $\tau_{w}=0$ modulo $J$. Hence, $\sigma_v=\tau_v=0$ modulo $J$, and by Lemma \ref{lemma:heightone} we may conclude that in $\bF_1(X_P)$, the line $L_E$ lies on a single irreducible component, which is isomorphic to $\xi_1$.

\subsection{Case $\mu_E=2$}
Fix a primitive edge $E$ of $P$ and suppose that $\mu_E=2$. By using Equation \eqref{eqn:square}, we get that modulo $J$, $\sigma_v=\tau_{w}$ and $\tau_v=\sigma_{w}=0$.

Suppose that $\gamma(b)=\gamma(c)=\infty$. Then after possibly interchanging $b$ and $c$, $P$ is equal to some $P_{\alpha,\beta}$ with $\beta \geq 1$.
 By Lemma \ref{lemma:heightone} and the above relations, we have that modulo $J$,
\begin{align*}
\sigma_{(0,i)}=\tau_{(1,i)}=\sigma_v^i\\
\tau_{(0,i)}=\sigma_{(1,i)}=0.
\end{align*}
However, these relations generate $J$, since the $2\times 2$ minors of 
\begin{equation*}
\left(
\begin{array}{c c c c c c c c}
\sigma_v^0 s&\sigma_v^1 s&\ldots&\sigma_v^{\alpha-1} s&\sigma_v^0 t&\sigma_v^1 t&\ldots&\sigma_v^{\beta-1} t\\
\sigma_v^1 s&\sigma_v^2 s&\ldots&\sigma_v^\alpha s&\sigma_v^1 t&\sigma_v^2 t&\ldots&\sigma_v^\beta t\\
\end{array}
\right)
\end{equation*}
vanish, and $I_P$ is generated by the $2\times 2$ minors of \eqref{eqn:minors}. Hence in an affine neighborhood of the line $L_E$, $\bF_1(X_P)$ is just the affine line $\A^1$.

Suppose now that $\gamma(b)\neq \infty$.
 Let $a'=(-1,\gamma(b))$. The relation
\begin{equation}\label{eqn:an}
x_{a'}x_b^{\gamma(b)-2}x_c-x_v^{\gamma(b)}=0
\end{equation}
imposes the condition $\sigma_v^{\gamma(b)}=0$ modulo $J$. If $\gamma(c)\neq \infty$, we similarly obtain $\tau_w^{\gamma(c)}=0$ modulo $J$. 
Coupled with the equations of Lemma \ref{lemma:heightone},  we obtain the following equations modulo $J$:
\begin{align}\label{eqn:m2}
\sigma_{(u_1,u_2)}&=\begin{cases}
\sigma_v^{u_2} & u_1=0\ \textrm{and}\  u_2 < \gamma \\
0 &\ \textrm{else}
\end{cases}\\
\label{eqn:m2a}\tau_{(u_1,u_2)}&=\begin{cases}
\sigma_v^{u_2} & u_1=1\ \textrm{and}\  u_2 < \gamma \\
0 &\ \textrm{else}
\end{cases}\\
\sigma_v^\gamma&=0.\label{eqn:m2b}
\end{align}
where $\gamma=\min \{\gamma(b),\gamma(c)\}$.

We claim that these relations generate the ideal $J$. Indeed, the ideal $I_P\subset R$ is generated by homogeneous binomials $f=f_1-f_2$. Consider such a binomial $f$ of degree $(i,j)$. Then $\deg \phi(f)=j$, and if $j \geq \gamma$, then this must vanish modulo the relations of \eqref{eqn:m2},  \eqref{eqn:m2a}, and \eqref{eqn:m2b}.
On the other hand, if $j<\gamma$, then the only coordinates $x_u$ appearing in $f$ must satisfy  $u \in P_{\gamma-1,\gamma-1}$ and the argument used above for $P_{\alpha,\beta}$ shows that $\phi(f)$ vanishes modulo the above relations.

Thus, if not both $\gamma(b)$ and $\gamma(c)$ are infinite, we may conclude that $L_E$ lies on a single irreducible component of $\bF_1(X_P)$, which is isomorphic to $\xi_\gamma$. 

\subsection{Case $\mu_E=1$}
We first analyze the possible values of $\gamma(b)$ and $\gamma(c)$ under the assumption that $\mu_E=1$. Without loss of generality, we assume that $\gamma(b)\geq \gamma(c)$. If $\gamma(b)=\infty$, then we have $P=P_{\alpha,0}$ for some $\alpha\geq 1$. Note that $\gamma(c)=2$ unless $\alpha=1$, in which case $X_P$ is $\PP^2$ in its linear embedding and $\bF_1(X_P)$ is simply $(\PP^2)^*\cong \PP^2$.

 Suppose instead that $\gamma(b)$ is finite. Note that by definition of $\gamma(c)$, $P$ must contain a lattice point of the form
$(2-\lambda\gamma(c),\gamma(c))$ for some $\lambda\in \ZZ$. But since $\mu_E=1$, this implies that $\lambda \geq 1$. On the other hand, we must also have
$$\gamma(c)q\geq p(\lambda\gamma(c)-2)$$
which in turn implies
$$\gamma(c)> (\gamma(b)-1)(\gamma(c)-2),$$
that is,
$$2(\gamma(b)+\gamma(c)-1)>\gamma(b)\gamma(c).$$
This is only possible if $\gamma(c)=2$ or $\gamma(b)=\gamma(c)=3$.

We now consider the case \fbox{$\gamma(c)=2$.} Then $d'=(0,2)\in P$, and we have the relation
$$
x_bx_{d'}-x_v^2\in I_P.
$$
This, together with Lemma \ref{lemma:heightone}, leads to the relations
\begin{align}
\label{eqn:b21}&\sigma_{(0,j)}=\sigma_v^j,\quad \tau_{(0,j)}=j\tau_v\sigma_v^{j-1}\qquad (0,j)\in P\\
\label{eqn:b22}&\tau_v^2=0
\end{align}
modulo $J$.
Now, note that modulo the relation $\tau_v^2=0$, the $2\times 2$ minors of the matrix
\begin{equation}\label{eqn:minors2}
\left(
\begin{array}{c c c c c}
\sigma_v^0s & \sigma_v^1s+\tau_vt & \sigma_v^2s+2\sigma_v\tau_vt& \ldots & \sigma_v^{\alpha-1}s+(\alpha-1)\sigma_v^{\alpha-2}\tau_vt\\
\sigma_v^1s+\tau_vt & \sigma_v^2s+2\sigma_v\tau_vt& \sigma_v^3s+3\sigma_v^2\tau_vt&\ldots & \sigma_v^{\alpha}s+\alpha\sigma_v^{\alpha-1}\tau_vt\\
\end{array}
\right)
\end{equation}
all vanish.
Hence, if $P=P_{\alpha,0}$ $\alpha \geq 2$, the relations of \eqref{eqn:b21} and \eqref{eqn:b22} generate $J$, and we conclude that in a neighborhood of $L_E$, $\bF_1(X_P)$ is just $\A^1\times\xi_2$.

Suppose that $P\neq P_{\alpha,0}$ $\alpha \geq 2$; then $\gamma(b)$ is finite. We can then use Equation \eqref{eqn:an} to obtain the relations $\sigma_v^{\gamma(b)}=0$ and $\sigma_{a'}=\gamma(b)\sigma_v^{\gamma(b)-1}\tau_v$ modulo $J$. With Lemma \ref{lemma:heightone}, this implies that $\sigma_u=\tau_u=0$ modulo $J$ for $u=(u_1,u_2)$ with $u_2>\gamma(b)$.

We claim that these relations generate all of $J$. Indeed, consider a homogeneous binomial $f$ of degree $(i,j)$ in $I_P$. Arguing as in the case $\mu_E=2$, if
$\deg f>j$, then $\phi(f)$ will vanish modulo the above relations.
Suppose instead that $\deg f \leq j$.
If the only coordinates appearing in $f$ are contained in some $P_{\alpha,0}$, then the vanishing of the minors of \eqref{eqn:minors2} shows that $\phi(f)$ must vanish modulo the relations. The only possible monomial which can appear in such $f$ which is not in $P_{\alpha,0}$ is $a'$, and any $f$ containing it must have the form $f=f_1-f_2$, where $f_1=x_{a'}x_b^{\gamma(b)}x_c$ and $f_2$ only involves coordinates in $P_{\alpha,0}$.  But modulo equations in $I_P$ only involving coordinates in $P_{\alpha,0}$, such a binomial is equal to the left hand side of \eqref{eqn:an}, whose image under $\phi$ vanishes modulo the above relations.

Hence, these relations generate $J$, and we may conclude that the point $L_E$ of $\bF_1(X_P)$ lies on a single component, which is isomorphic to $\xi_{\gamma(b)}\times\xi_2$.

\begin{figure}
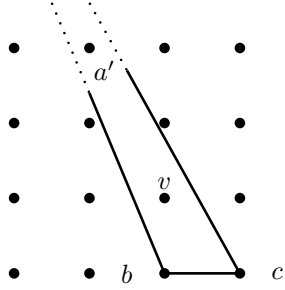

\figcubic
\caption{The case $\mu_E=1$, $\gamma(b)=\gamma(c)=3$}\label{fig:33}
\end{figure}

We now deal with the remaining case \fbox{$\gamma(b)=\gamma(c)=3$.} Then $P$ contains the lattice points $b=(0,0)$, $c=(1,0)$, $v=(0,1)$, and $a'=(-1,3)$, and these are the only lattice points $u=(u_1,u_2)$ contained in $P$ with $u_2\leq 3$, see Figure \ref{fig:33}. Now, any relation among these four lattice points is a multiple of the cubic
$$
x_v^3-x_a'x_bx_c
$$
which imposes exactly the conditions
\begin{align*}
\sigma_v^3&=0\ \qquad& \tau_v^3&=0\\
\sigma_{a'}&=3\sigma_v^2\tau\qquad &\tau_{a'}=&3\sigma\tau^2.
\end{align*}
Lemma \ref{lemma:heightone} implies that for any $u\in P$ with $u_2>3$, we must have $\sigma_u=\tau_u=0$. Now, arguments similar to those used in the previous cases imply that $J$ is generated exactly by the above relations, so we may conclude that the point $L_E$ of $\bF_1(X_P)$ lies on a single component, which is isomorphic to $\xi_3\times\xi_3$.

\subsection{Conclusion of the proof}
We now use our local study to completely describe $\bF_1(X_P)$. A key point in our argument is that since $\bF_1(X_P)$ is projective, any irreducible component of $\bF_1(X_P)$ must contain a fixed point, that is, a point corresponding to one of the lines $L_E$. Thus, by the above local study, we already know the local structure of every component of our Fano scheme.

First, suppose that $X_P$ is a rational normal scroll, that is, $P=P_{(\alpha,\beta)}$. As noted above, if $\alpha=1,\beta=0$, then $X_P$ is just $\PP^2$, and its Fano scheme of lines is $\PP^2$ as well. If $\alpha=\beta=1$, $P$ has four primitive edges $E_1,\ldots,E_4$, and each has $\mu_{E_i}=2$ and the irreducible component containing $L_{E_i}$ is locally just $\A^1$. The only possibility for gluing four copies of $\A^1$  to get a projective scheme results in $\PP^1\sqcup \PP^1$; in our case, this gluing can be seen quite explicitly. Alternatively, if $\alpha=\beta=1$, then $X_P$ is a nonsingular quadric in $\PP^3$, isomorphic to $\PP^1\times\PP^1$, and it is well known that $\bF_1(X_P)\cong \PP^1\sqcup\PP^1$, the two irreducible components corresponding to the two different rulings coming from projecting to the first and second $\PP^1$ factors.

If $\alpha\geq 2$ and $\beta=0$, $P$ has two primitive edges $E_1,E_2$, for which we have $\mu_{E_1}=\mu_{E_2}=1$. The local analysis shows that our Fano scheme is constructed by gluing two copies of $\A^1\times\xi_2$, resulting in $\bF_1(X_P)\cong \PP^1\times\xi_2$. If $\alpha\geq 2$ and $\beta=1$, $P$ has three primitive edges, one with $\mu=\alpha+1$ and the other two with $\mu=2$. For the edges with $\mu=2$ we get two copies of $\A^1$ gluing to a $\PP^1$, and for the edge with $\mu=\alpha+1>2$ we get an isolated point; hence $\bF_1(X_P)\cong \PP^1\sqcup \xi_1$. If on the other hand, If $\alpha,\beta\geq 2$, then $P$ has only two primitive edges, both with $\mu=2$, and  we get obtain two copies of $\A^1$ gluing to give $\bF_1(X_P)\cong \PP^1$.

If we assume instead that $X_P$ is not a rational normal scroll, our local analysis shows that set theoretically, $\bF_1(X_P)$ consists only of a finite number of fixed points, and the scheme structure is exactly as claimed in the theorem.

This concludes the proof of Theorem \ref{thm:main}.\qed

\section{Applications and Examples}\label{sec:ex}
We illustrate Theorem \ref{thm:main} with several examples.

\begin{figure}
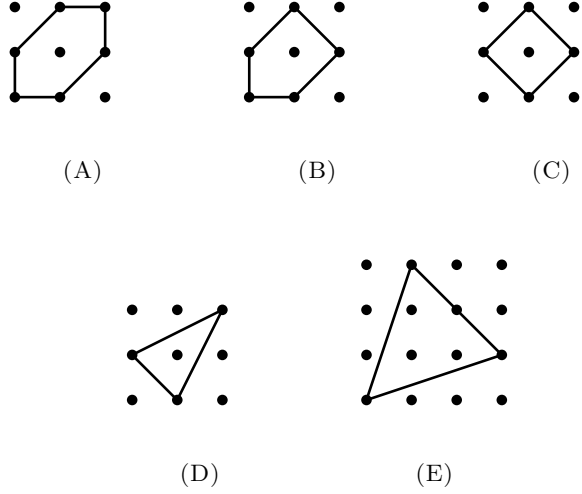

\begin{subfigure}{3cm}
\figdpa
\caption{}\label{fig:a}
\end{subfigure}
\begin{subfigure}{3cm}
\figdpb
\caption{}\label{fig:b}
\end{subfigure}
\begin{subfigure}{3cm}
\figdpc
\caption{}\label{fig:c}
\end{subfigure}

\vspace{1cm}

\begin{subfigure}{3cm}
\figdpd
\caption{}\label{fig:d}
\end{subfigure}
\begin{subfigure}{3cm}
\figdpe
\caption{}\label{fig:e}
\end{subfigure}
\caption{Examples for Theorem \ref{thm:main}}\label{fig:ex}
\end{figure}

\begin{ex}\label{ex:deg}
We consider the toric surfaces corresponding to the polygons pictured in Figure \ref{fig:ex}. For \ref{fig:a}, $X_P$ is the smooth del Pezzo surface of degree $6$ in its anticanonical embedding. Since $P$ has exactly $6$ primitive edges, we see that $\bF_1(X_P)$ consists of six isolated points, and $\deg \bF_1(X_P)=6$.

For \ref{fig:b}, $X_P$ is a singular del Pezzo surface of degree $5$ in its anticanonical embedding. $P$ has five primitive edges; for two we have $\mu_E=3$ giving us two copies of $\xi_1$, for two we have $\mu_E=2$ with $\min\{\gamma(b),\gamma(c)\}=2$ giving us two copies of $\xi_2$, and the remaining edge has $\mu_E=1$ with $\gamma(b)=\gamma(c)=2$, giving us a copy of $\xi_2\times \xi_2$. In total, we obtain $\deg \bF_1(X_P)=10$.

For \ref{fig:c}, $X_P$ is a singular del Pezzo surface of degree $4$ in its anticanonical embedding, a complete intersection of two singular quadrics. $P$ has four primitive edges; each has $\mu_E=1$ and $\gamma(b)=\gamma(c)=2$, giving us four copies of $\xi_2\times \xi_2$. In total, we obtain $\deg \bF_1(X_P)=16$.

For \ref{fig:d}, $X_P$ is a singular del Pezzo surface of degree $3$ in its anticanonical embedding, a singular cubic. $P$ has three primitive edges; each has $\mu_E=1$ and $\gamma(b)=\gamma(c)=3$, giving us three copies of $\xi_3\times \xi_3$. In total, we obtain $\deg \bF_1(X_P)=27$.

For \ref{fig:e}, $X_P$ is a singular del Pezzo surface of degree $2$ in the embedding given by twice the anticanonical class. $P$ has two primitive edges; each has $\mu_E=1$ and $\gamma(b),\gamma(c)=2,3$, giving us two copies of $\xi_2\times \xi_3$. In total, we obtain $\deg \bF_1(X_P)=12$.
\end{ex}

Given any projective scheme $X\subset \PP^n$, its Fano scheme $\bF_k(X)$ embeds as a subscheme of the Grassmannian $G(k+1,n+1)$. By $\deg \bF_k(X)$, we mean the degree of $\bF_k(X)$ obtained by composing the embedding $\bF_k(X)\hookrightarrow G(k+1,n+1)$ with the Pl\"ucker embedding of $G(k+1,n+1)$. Note that if $\bF_k(X)$ is zero-dimensional, the number of $k$-planes contained in $X$ is bounded above by $\deg \bF_k(X)$, with equality if and only if $\bF_k(X)$ is reduced.

\begin{prop}\label{prop:degen}
Let $S$ be a non-singular curve, $\cX\subset \PP^n\times S$ a flat projective family of $K$-schemes over $S$ with general fiber $Y$ and special fiber $X$. Suppose that $\dim \bF_k(X)=\dim \bF_k(Y)$. Then 
$$
\deg \bF_k(Y)\leq \deg \bF_k(X).
$$
\end{prop}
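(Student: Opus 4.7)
The proposal is to use the relative Fano scheme together with the fact that flat limits preserve degree, plus a comparison between the flat limit of $\bF_k(Y)$ and $\bF_k(X)$ at the special fiber.

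First, I would form the relative Fano scheme $\pi:\bF_k(\cX/S)\to S$, which sits inside $G(k+1,n+1)\times S$ and is projective over $S$. Its fiber over a point $s\in S$ is canonically $\bF_k(\cX_s)$, so over the generic point $\eta\in S$ the fiber is $\bF_k(Y)$ and over the special point $0\in S$ the fiber is $\bF_k(X)$. Next I would let $\mathcal{F}\subset \bF_k(\cX/S)$ be the scheme-theoretic closure of $\bF_k(Y)=\bF_k(\cX_\eta)$. By construction $\mathcal{F}$ has no embedded points and every associated point dominates $S$; since $S$ is a non-singular curve, this is exactly the criterion that $\mathcal{F}\to S$ be flat.

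Because $\mathcal{F}\to S$ is flat and projective, the Hilbert polynomial of the fibers with respect to the Plücker embedding of $G(k+1,n+1)$ is constant on $S$. In particular the generic fiber $\mathcal{F}_\eta=\bF_k(Y)$ and the special fiber $\mathcal{F}_0$ have the same dimension and the same degree:
\begin{equation*}
\deg \bF_k(Y)=\deg \mathcal{F}_0.
\end{equation*}
By hypothesis $\dim \bF_k(X)=\dim \bF_k(Y)$, so $\mathcal{F}_0$ is a closed subscheme of $\bF_k(X)$ of the same (pure) dimension as $\bF_k(X)$. Consequently the top-dimensional components of $\mathcal{F}_0$ form a subset of the top-dimensional components of $\bF_k(X)$, and at each such generic point the length of $\mathcal{F}_0$ is at most the length of $\bF_k(X)$; since degree is the sum over top-dimensional components of (length at generic point) times (degree of the reduced component), we obtain $\deg \mathcal{F}_0\le \deg \bF_k(X)$, and combining with the equality above finishes the proof.

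The main technical point I expect to need care with is verifying flatness of $\mathcal{F}\to S$: one must know that the scheme-theoretic closure of the generic fiber in a family over a Dedekind base has no embedded associated points and hence is flat. This is the standard fact that over a Dedekind scheme a torsion-free finitely generated module is flat, applied to the structure sheaf of $\mathcal{F}$. The other ingredient, that the hypothesis on dimensions upgrades the inclusion $\mathcal{F}_0\subset \bF_k(X)$ into the numerical inequality on degrees, is essentially bookkeeping with multiplicities at the generic points of the top-dimensional components; this is why the equidimensional hypothesis $\dim\bF_k(X)=\dim\bF_k(Y)$ is essential (without it, $\mathcal{F}_0$ could pick up non-reduced structure on lower-dimensional components of $\bF_k(X)$ that contribute nothing to $\deg\bF_k(X)$).
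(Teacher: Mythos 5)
Your proposal is correct and follows essentially the same route as the paper: both pass to the relative Fano scheme, replace it by the flat limit of the generic fiber (the paper takes the closure of $\bF_k(\cX/S)\setminus\bF_k(X)$, you take the scheme-theoretic closure of $\bF_k(\cX_\eta)$, which yields the same special fiber), use constancy of the Hilbert polynomial in flat families to get $\deg\bF_k(Y)=\deg\mathcal{F}_0$, and then use the equidimensionality hypothesis to bound $\deg\mathcal{F}_0$ by $\deg\bF_k(X)$. Your write-up is somewhat more explicit about why the flat limit is flat (torsion-freeness over a Dedekind base) and about the multiplicity bookkeeping, but the argument is the same.
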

\begin{proof}
The relative Fano scheme $\bF_k(\cX/S)$ gives a family over $S$ whose fibers are the Fano schemes $\bF_k(\cX_s)$, where $\cX_s$ is the fiber of $\cX$ over $s\in S$. In general, this family will not be flat. By considering the special fiber of the closure of $\bF_k(\cX/S)\setminus \bF_k(X)$, we get a subscheme $\bF_k(X)'\subset \bF_k(X)$ fitting into a flat family with $\bF_k(Y)$ \cite[Proposition II.29]{eisenbud:00a}. 
Since $\dim \bF_k(X)' = \dim \bF_k(Y)=\dim \bF_k(X)$, we have $\deg \bF_k(X)' \leq \deg \bF_k(X)$. Finally, $\deg \bF_k(Y) = \deg \bF_k(X)'$ since flatness preserves Hilbert polynomials \cite[Theorem 9.9]{hartshorne:77a}.
\end{proof}

Using the above proposition coupled with Theorem \ref{thm:main}, we may obtain upper bounds on the number of lines on certain non-toric surfaces. We illustrate this application by recovering bounds on the number of lines on certain del Pezzo surfaces.

\begin{ex}
It is classically known that general del Pezzo surfaces of degrees $3$, $4$, and $5$ in their anticanonical embeddings contain $27$, $16$, and $10$ lines, respectively. Our methods reprove that these quantities are upper bounds for the number of lines. Indeed, such general surfaces have degenerations to the surfaces of \ref{fig:d}, \ref{fig:c}, and \ref{fig:b}, respectively, and the degrees of their Fano schemes calculated in Example \ref{ex:deg} are exactly the quantities $27$, $16$, and $10$.

Slightly more subtle arguments show that for any cubic surface $X\subset \PP^3$ with $\dim \bF_1(X)=0$, we have 
$\deg \bF_1(X)=27$,  cf. \cite[Exercise IV.74]{eisenbud:00a}. Indeed, $\bF_1(X)$ is a codimension-four local complete intersection in $G(2,4)$, so for any degeneration as in Proposition \ref{prop:degen}, the relative Fano scheme $\bF_k(\cX/S)$ will be flat over $S$. Likewise, for any complete intersection $X$ of 2 conics in $\PP^4$ with $\dim \bF_1(X)=0$, we have $\deg \bF_1(X)=16$. Indeed, in this case $\bF_1(X)$ is a codimension-six local complete intersection in $G(2,5)$ and a similar argument applies. 
\end{ex}

\subsection*{Acknowledgments}
We thank Andreas Hochenegger for helpful comments.

\bibliographystyle{alpha}
\bibliography{fano-surfaces}
\end{document}